\documentclass[10pt,a4paper]{article}
\usepackage{amsmath,amssymb,amsfonts,amsthm}
\usepackage{float,graphicx,color}
\usepackage[all,cmtip]{xy}
\usepackage{tikz}
\usetikzlibrary{shapes.geometric}
\usepackage{tikz-cd}
\usetikzlibrary{patterns, arrows.meta, calc}
\usetikzlibrary{matrix}

\newcommand{\rmd}{\mathrm{d}}

\newcommand{\rmD}{\mathrm{D}}
\newcommand{\rmH}{\mathrm{H}}

\newcommand{\rmT}{\mathrm{T}}

\newcommand{\bbN}{\mathbb{N}}

\newcommand{\bbR}{\mathbb{R}}

\newcommand{\calD}{\mathcal{D}}

\newcommand{\calF}{\mathcal{F}}

\newcommand{\calH}{\mathcal{H}}

\newcommand{\calS}{\mathcal{S}}

\newcommand{\bs}{{\scriptscriptstyle{\bullet}}}

\DeclareMathOperator{\Div}{div}
\renewcommand{\div}{\Div}

\DeclareMathOperator{\grad}{grad}
\DeclareMathOperator{\curl}{curl}
\DeclareMathOperator{\defo}{def}

\DeclareMathOperator{\inc}{inc}

\DeclareMathOperator{\img}{img}
\DeclareMathOperator{\coker}{coker}

\DeclareMathOperator{\GL}{GL}
\DeclareMathOperator{\End}{End}
\DeclareMathOperator{\Hom}{Hom}

\newcommand{\frso}{\mathfrak{so}}

\newcommand{\urmH}{\underline{\rmH}}
\newcommand{\uurmH}{\underline{\underline{\rmH}}}

\newcommand{\Hsymdiv}{\uurmH_{\div}}

\newcommand{\Hinc}{\uurmH_{\inc}}

\newcommand{\RM}{\mathrm{RM}}

\DeclareMathOperator{\alter}{alt}

\newcommand{\sym}{\mathrm{sym}}

\newcommand{\CdR}{\mathcal{H}_{\mathrm{dR}}}

\newcommand{\beq}{\begin{equation}}
\newcommand{\eeq}{\end{equation}}

\newcommand{\mapping}[4]{
\left\{
\begin{array}{rcl}
\displaystyle #1  &\to& #2\\
\displaystyle #3  &\mapsto & #4
\end{array} \right.
}

\definecolor{myyellow}{rgb}{1, 0.85, 0.15}
\definecolor{myorange}{rgb}{0.85, 0.5, 0.15}

\definecolor{mymagenta}{rgb}{0.85, 0.15, 1}
\definecolor{mycyan}{rgb}{0.15, 1, 0.85}

\definecolor{myred}{rgb}{0.7, 0.15, 0.15}
\definecolor{mypurple}{rgb}{0.425, 0.15, 0.425}
\definecolor{myblue}{rgb}{0.15, 0.15, 0.7}
\definecolor{mybluegreen}{rgb}{0.15, 0.425, 0.425}
\definecolor{mygreen}{rgb}{0.15, 0.7, 0.15}

\definecolor{mylightestbrown}{rgb}{0.825, 0.45, 0.225}
\definecolor{mybrown}{rgb}{0.55, 0.3, 0.15}
\definecolor{mydarkbrown}{rgb}{0.275, 0.15, 0.075}

\definecolor{mylightestgray}{rgb}{0.667,0.667,0.667}
\definecolor{mylightgray}{rgb}{0.5,0.5,0.5}
\definecolor{mygray}{rgb}{0.333, 0.333, 0.333}
\definecolor{mydarkgray}{rgb}{0.167,0.167,0.167}

\newcounter{npoint}[section]

\newtheorem{theorem}{Theorem}[section]

\newtheorem{corollary}[theorem]{Corollary}
\newtheorem{proposition}[theorem]{Proposition}

\theoremstyle{remark}
\newtheorem{example}{Example}[section]

\theoremstyle{definition}
\newtheorem{remark}{Remark}[section]

\newcounter{quest}[exercise]

\newcounter{subquest}[quest]

\newcommand{\solution}[1]{
}

\newcounter{pushlevel}[theorem]

\newcommand{\push}{
\stepcounter{pushlevel}
 \vspace{-1ex} \noindent \hspace{4.5ex} \begin{minipage}[t]{\textwidth*\real{0.99} - 4.5ex}
\mbox{}\hspace{-1ex}\rule[-1.5ex]{.1pt}{1.5ex}\rule{5.3ex}{.1pt}
\vspace{-.65ex}

}

\newcommand{\unpush}{
\end{minipage}
\noindent \mbox{}\hspace{4.2ex}\rule{.1pt}{1.5ex}\rule{5.3ex}{.1pt} \hspace{\stretch{1}}
\noindent
\addtocounter{pushlevel}{-1}

}

\newcounter{theopoint}[theorem]
\newcommand{\tpoint}{
\medskip
\stepcounter{theopoint}
\noindent (\roman{theopoint}) 
}

\title{
A note on short and long exact sequences in the\\ BGG construction
of complexes from complexes
}

\author{Snorre H. Christiansen\thanks{Department of Mathematics, University of Oslo, PO Box 1053 Blindern, NO 0316 Oslo, Norway. email: {\tt snorrec@math.uio.no.}}}

\date{}

\begin{document}

\maketitle

\begin{abstract} We first show how the cohomology of some Bernstein-Gelfand-Gelfand (BGG) sequences that are important for the numerical analysis of partial differential equations, can be obtained through the construction of a long exact sequence connecting cohomology groups. Then we explain the extension of this result to the non-injective/surjective case through the systematic use of short exact sequences of complexes and their associated long exact sequences of cohomology groups. Finally an interpretation in terms of spectral sequences is given.
\end{abstract}

\bigskip

\noindent MSC: 58J10, 65N30

\noindent Key words: BGG, finite elements, cohomology

\section{Introduction}

New finite elements for elasticity have been designed using a link between the so-called elasticity complex and the de Rham complex, whose discretization is much better understood \cite{ArnFalWin06IMA1,ArnFalWin06IMA2,ArnFalWin06}. This link can be interpreted as an example of the BGG construction \cite{BerGelGel71}. For a more recent account of it, see \cite{CapSloSou01}, for an introduction, see \cite{Eas99}, and for course notes see \cite{Cap26}.

Many other differential complexes can be given such a presentation and studied in functional frameworks adapted to the needs of numerical analysis \cite{ArnHu21}.  Like \cite{CheHua26} we have borrowed the title from this reference. Numerous finite element spaces have been designed using this paradigm, and other types of numerical methods are starting to integrate the technique. It has been extended to more general situations within the BGG machinery \cite{CapHu24}. 

Concerning the above mentioned functional frameworks, for the differential operators of the elasticity complex, different Sobolev regularities on the spaces will be natural for applications in elasticity as above and in relativity as in \cite{Chr11NM,ChrHuLin26}. Even for a given application such as the Hellinger-Reissner formulation of elasticity, different regularities may suggest different finite elements, as in \cite{PecSch11}.

The BGG framework can be used to derive null-homotopies for differential complexes based on null-homotopies for the de Rham sequence, with appropriate regularities \cite{ChrHuSan20,CapHu23}.
It can also be used to elucidate some physical models. The BGG reduction provides the passage from, in particular, the Cosserat models to the Cauchy models of elasticity \cite{CapHu24}.

In this largely expository note, compared with previous presentations of the BGG construction geared towards the needs of numerical analysis, we mainly add emphasis on short exact sequences of complexes as they give rise to long exact sequences of cohomology groups. This key result can be found in any textbook on homological algebra, e.g. \cite[\S XX.2, Theorem 2.1]{Lan02} or \cite[Theorem 1.3.1]{Wei94}. In spite of the latter's warning (expressed page 11) that you should ''keep the proof to yourself!'', a full proof was provided in the brief review of homological algebra included in \cite[\S 3.1.1]{Chr09AWM}. In addition to illustrations of short and long exact sequences, we also provide an application of spectral sequences to the BGG situation.

We acknowledge that from the point of view of homological algebra specialists at the level of \cite{Wei94} (not to mention \cite{McC01}), and BGG practitioners, such as those cited above, there can be nothing fundamentally new in the concepts and type of arguments described here. Furthermore it should be noted that much of the literature on BGG is \emph{not} concerned with complexes and their cohomology, but rather with more general situations where naturality of the differential operators is key.

The goal here is to show how one can get to some central results of the recent literature in numerical analysis, in a quick and transparent way, using some standard techniques from homological algebra and differential geometry that are not necessarily widely known outside of those fields. In total we give three different proofs characterizing the cohomology of, in particular, continuous and discrete elasticity sequences. In turn, this could help identify concepts relevant to the further development of discretization techniques.

The paper is organized as follows. In \S \ref{sec:longexact} we construct, in the BGG context, a long exact sequence of cohomology groups, mimicking that obtained from a short exact sequence of complexes. In \S \ref{sec:interlude} we provide an inderlude on homological algebra and differential geometry: we give some background on mapping cones in \S \ref{sec:mapcon} and covariant exterior derivatives in \S \ref{sec:covextder}. In \S \ref{sec:bgg} we interpret the BGG reduction (without hypotheses of injectivity and surjectivity on the connecting map $S$) through a short exact sequence of complexes. In \S \ref{sec:spectral} we illustrate how a spectral sequence can be used to obtain essentially the same result on cohomology groups. In Appendix \S \ref{sec:alternative} we give an alternative proof of the main result of \S \ref{sec:longexact}.

\section{A long exact sequence\label{sec:longexact}}


Here we recall the setup of \cite{ArnHu21} and then provide, in Proposition \ref{prop:longexact} and its corollary, an alternative point of view on a key result from that paper. The basic BGG construction is as follows. We suppose we have two complexes (of real vectorspaces) linked by a commuting diagram:
\begin{equation}
\begin{tikzcd}
B^0 \ar[r,"\rmd"]                 & B^1 \ar[r,"\rmd"]                 & \ldots \ar[r,"\rmd"]  & B^i \ar[r,"\rmd"] & B^{i+1} \ar[r, "\rmd"] & \ldots \\
A^0 \ar[r,"\rmd"]  \ar[ur,"S^0"] & A^1 \ar[r,"\rmd"]  \ar[ur,"S^1"] &  \ldots \ar[r,"\rmd"]  \ar[ur, "S^{i-1}"] & A^i \ar[r,"\rmd"]  \ar[ur, "S^i"] & A^{i+1} \ar[r,"\rmd"] \ar[ur, "S^{i+1}"]  & \ldots 
  \end{tikzcd}
\end{equation}
Suppose furthermore that we have an index $j$ such that:
\begin{itemize} 
\item for $i <j$, $S^i$ is injective,
\item $S^j$ is bijective,
\item for $i >j$, $S^i$ is surjective. 
\end{itemize}
Then we get:
\begin{proposition} We have that the operator:
\begin{equation} \label{eq:dSd}
\rmd (S^j)^{-1} \rmd : B^j \to A^{j+1},
\end{equation}
induces a map:
\begin{equation}
D: \coker S^{j-1} \to \ker S^{j+1}.
\end{equation}
Furthermore we get a well defined sequence, with differentials induced by the two $\rmd$ and in addition the operator $D$:
\begin{equation}\label{eq:cokerker}
\begin{tikzcd}
\ldots \ar[r,"\rmd"] & \coker S^{j-2} \ar[r,"\rmd"] & \coker S^{j-1} \ar[r,"D"] & \ker S^{j+1} \ar[r,"\rmd"] &  \ker S^{j+2} \ar[r,"\rmd"] & \ldots
\end{tikzcd}
\end{equation}
\end{proposition}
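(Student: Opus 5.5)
The plan is a direct verification resting on two facts: the two complexes commute with the diagonal maps, i.e. $\rmd S^{i} = S^{i+1}\rmd$ as maps $A^i \to B^{i+2}$ (this is what the commuting diagram means), and $\rmd\rmd=0$ in both complexes. I will check, in order, that the maps in \eqref{eq:cokerker} are well defined and that each consecutive composition vanishes. Reading \eqref{eq:cokerker} as a ``sequence'' in the sense of a complex, this is all that is needed; no exactness is claimed here.

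\emph{Well-definedness of $\rmd$ on cokernels and kernels.} For $i \le j-1$, the map $\rmd : B^{i+1} \to B^{i+2}$ sends $\img S^{i-1}$ into $\img S^{i}$. Indeed, $\rmd S^{i-1} a = S^{i}\rmd a$. Hence $\rmd$ descends to a map $\coker S^{i-1} \to \coker S^{i}$. Similarly, for $i \ge j+1$, the map $\rmd : A^i \to A^{i+1}$ sends $\ker S^i$ into $\ker S^{i+1}$, since $S^{i+1}\rmd a = \rmd S^i a = 0$. Compositions of two consecutive such maps vanish because $\rmd\rmd=0$ already holds before passing to quotients or subspaces.

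\emph{The operator $D$.} Next I show that $\rmd (S^j)^{-1}\rmd$ kills $\img S^{j-1}$ and lands in $\ker S^{j+1}$.
\begin{itemize}
\item For $b = S^{j-1}a$, the commutation relation gives $\rmd b = S^j \rmd a$. Hence $(S^j)^{-1}\rmd b = \rmd a$, and applying $\rmd$ gives $\rmd\rmd a = 0$. So $D$ is well defined on $\coker S^{j-1}$.
\item For any $b\in B^j$, set $c=(S^j)^{-1}\rmd b\in A^j$. Then $S^{j+1}\rmd c = \rmd S^j c = \rmd\rmd b = 0$, so $\rmd c \in \ker S^{j+1}$.
\end{itemize}

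\emph{Compositions involving $D$.} Two compositions remain, and both vanish.
\begin{itemize}
\item On the left, $D\circ\rmd$ applied to a class $[b']$ with $b'\in B^{j-1}$ is represented by $\rmd (S^j)^{-1}\rmd\rmd b' = 0$.
\item On the right, $\rmd\circ D$ is represented by $\rmd\rmd (S^j)^{-1}\rmd b = 0$.
\end{itemize}

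None of these steps is a genuine obstacle; each is a one-line diagram chase. The only place requiring care is index bookkeeping. One must note that $S^i$ maps $A^i \to B^{i+1}$, so that $\coker S^{j-1}$ is a quotient of $B^j$ and $\ker S^{j+1}$ is a subspace of $A^{j+1}$. This is what makes $D$, built from $\rmd: B^j\to B^{j+1}$, $(S^j)^{-1}: B^{j+1}\to A^j$ and $\rmd: A^j\to A^{j+1}$, go between the right spaces.

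The injectivity and surjectivity hypotheses on $S^i$ for $i\neq j$ are not needed for well-definedness. Only the bijectivity of $S^j$ is used, to make sense of $(S^j)^{-1}$. The other hypotheses presumably matter later, for relating the cohomology of \eqref{eq:cokerker} to that of the original complexes.
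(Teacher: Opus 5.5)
Your proof is correct and follows the paper's route: you show that $\rmd (S^j)^{-1}\rmd$ kills $\img S^{j-1}$ and lands in $\ker S^{j+1}$, and you inherit $D\circ\rmd=0$ and $\rmd\circ D=0$ from $\rmd\rmd=0$; you also spell out why $\rmd$ descends to the cokernels and restricts to the kernels, which the paper takes for granted. One small index slip: since $\img S^{i-1}\subset B^i$, the map sending $\img S^{i-1}$ into $\img S^{i}$ is $\rmd : B^{i}\to B^{i+1}$, not $B^{i+1}\to B^{i+2}$.
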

\begin{proof}
\tpoint The map defined in (\ref{eq:dSd}) has range in $\ker S^{j+1}$ and kernel containing $\img S^{j-1}$. This sustains the definition of $D$.

\tpoint The identities that $ D \circ \rmd = 0$ at $ \coker S^{j-1}$ and $ \rmd \circ D = 0$ at $\ker S^{j+1}$, are inherited from the corresponding identities satisfied by the inducing operators.
\end{proof}
The challenge is to characterize the cohomology groups of the sequence (\ref{eq:cokerker}).

\begin{remark}[Sign conventions]\label{rem:signs}
To be more precise, in \cite{ArnHu21} the setup is rather that the operator $S$ anticommutes:
\begin{equation}
S \rmd + \rmd S = 0.
\end{equation}
Furthermore, it is sometimes supposed that the operator $S$ is obtained as a commutator, with operators $K^i : A^i \to B^i$:
\begin{equation}
S = \rmd K - K \rmd .
\end{equation}
(Notice that such a relation implies the anticommutation above).

We remark that if we alternate the signs of the differential $\rmd$ in both $A^\bs$ and $B^\bs$ we get:
\begin{equation}
S \rmd = \rmd S,
\end{equation}
so that $S$ is a morphism of complexes. Futhermore if we also alternate the signs of $K$ we get that:
\begin{equation}
S = \rmd K + K \rmd,
\end{equation}
which expresses that $K$ is a nullhomotopy for $S$. In particular $S$ induces the $0$ operator on cohomology.

Here, we follow the latter sign convention. We will for the most part not suppose that $S$ comes from a commutator. We even allow that it does not induce the $0$ operator on cohomology.
\end{remark}

The sequence in (\ref{eq:cokerker}) will be denoted $C^\bs$, adjusting indices for convenience. The situation may then be described as follows:
\begin{equation}\label{eq:setup}
\begin{tikzcd}
0 & 0 \\
C^{j-1} \ar[u] \ar[r] & C^j \ar[u] \arrow[dddrr,"\ D"] & 0 & 0 & 0 \\
B^{j-1} \ar[u] \ar[r] & B^j \ar[u] \ar[r] & B^{j+1} \ar[u] \ar[r] & B^{j+2} \ar[u] \ar[r]  & B^{j+3} \ar[u] \\ 
A^{j-2} \ar[u,"S^{j-2}"] \ar[r] & A^{j-1} \ar[u,"S^{j-1}"] \ar[r] & A^{j} \ar[u,"S^{j}"] \ar[r] & A^{j+1} \ar[u,"S^{j+1}"] \ar[r]  & A^{j+2} \ar[u,"S^{j+2}"] \\
0 \ar[u] & 0 \ar[u] & 0 \ar[u] & C^{j+1} \ar[u] \ar[r] & C^{j+2} \ar[u] \\
& & & 0 \ar[u] & 0 \ar[u]
\end{tikzcd}
\end{equation}
The exactness of the vertical complexes express that $S^i$ is injective and/or surjective, according to indices, and that $C^i$ is a cokernel or a kernel, according to indices.

For $i \leq j$ we denote by $P^i: B^i \to C^i$ the relevant projection, and for $i >j$ we denote by $I^i : C^i \to B^i$ the relevant injection.

Recall that a short exact sequence of complexes gives a long exact sequence on cohomology groups by a construction involving the snake lemma. 
The situation here is slightly different in that we complete the morphism $S^i : A^i \to B^{i+1}$ to the right, in the beginning (for $i < j$), and to the left, in the end (for $i > j$). We may therefore obtain a long exact sequence in the beginning, and a long exact sequence in the end. Remarkably we can connect these two long exact sequences into one long exact sequence:

\begin{proposition}\label{prop:longexact}
We have a long exact sequence :
\begin{equation}
\begin{tikzcd}
\calH^{j-1} C^\bs \ar[ddr,"\delta^{j-1}"]& \calH^j   C^\bs \ar[ddr,"\delta^j"]&  &  &  \\
\calH^{j-1} B^\bs \ar[u]  & \calH^j B^\bs \ar[u]  & \calH^{j+1}  B^\bs \ar[ddr,"\delta^{j+1}"]  & \calH^{j+2} B^\bs \ar[ddr,"\delta^{j+2}"] & \calH^{j+3} B^\bs  \\ 
\calH^{j-2} A^\bs \ar[u] & \calH^{j-1} A^\bs \ar[u] & \calH^{j} A^\bs \ar[u] & \calH^{j+1} A^\bs \ar[u]  & \calH^{j+2} A^\bs \ar[u]\\
 & & & \calH^{j+1} C^\bs  \ar[u]  & \calH^{j+2}  C^\bs \ar[u] 
\end{tikzcd}
\end{equation}
In this diagram the position of the cohomology groups mimicks positions in diagram (\ref{eq:setup}). The oblique arrows $\delta^i$ are all constructed by the snake lemma. The other arrows are the ones induced by $S^i$, $P^i$ and $I^i$.
\end{proposition}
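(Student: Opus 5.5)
The plan is to build two genuine short exact sequences of complexes, apply the standard long exact sequence from the snake lemma to each, and then splice them at the junction $\calH^j C^\bs \to \calH^{j+1} A^\bs$ ($\delta^j$), checking exactness there by hand.

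\emph{Beginning.} Define the complex $\tilde B^\bs$ as $B^\bs$ truncated after degree $j$: $\tilde B^i = B^i$ for $i \leq j$ and $0$ otherwise. Let $\tilde A^\bs$ be the shifted complex $\tilde A^i = A^{i-1}$ for $i \le j$, also truncated. Let $\tilde C^\bs$ be $C^i$ for $i \le j$. The $S^{i-1}$ give a short exact sequence $0 \to \tilde A^\bs \to \tilde B^\bs \to \tilde C^\bs \to 0$, degreewise exact for $i \le j$ because $S^{i-1}$ is injective for $i-1<j$. Its long exact sequence gives exactness at every group of the diagram with index $\le j-1$. Near the truncation, however, cohomology of truncated complexes differs from the original: in degree $j$, $\calH^j\tilde B = B^j/\rmd B^{j-1}$. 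So I will use the truncated sequence only for exactness at positions strictly before degree $j$, and at $\calH^{j-1}A^\bs \to \calH^jB^\bs \to \calH^jC^\bs$. The latter I will verify directly by a diagram chase, which is routine.

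\emph{End.} Dually, the $S^i$ for $i \ge j+1$ (surjective) together with the inclusions $I$ give $0\to C^\bs\to A^\bs\to B^{\bs+1}\to 0$ in degrees $\geq j+1$. The snake lemma gives exactness for the groups from $\calH^{j+1}A^\bs$ onward. Exactness at $\calH^{j+1}C^\bs$ is again checked directly.

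\emph{Junction.} This is the main obstacle: exactness at $\calH^jC^\bs$, $\calH^{j+1}C^\bs$ and at $\calH^{j}A^\bs\to\calH^{j+1}B^\bs$, where $D=\rmd(S^j)^{-1}\rmd$ enters.

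First I will identify the connecting maps. The map $\delta^j$ sends $[P^jb]$ with $Db=0$ to the class in $\calH^{j+1}C$ of $\rmd (S^j)^{-1}\rmd b$ — wait, that is zero. So $\delta^j$ should instead land in $\calH^{j+1}A$: lift $P^jb$ to $b$, take $\rmd b\in B^{j+1}$, pull back by $S^j$ to $a=(S^j)^{-1}\rmd b\in A^j$. Then $\rmd a = Db \in C^{j+1}$, which need not vanish; hence the statement's arrow from $\calH^j C$ runs diagonally to $\calH^{j+1}A$. Here I must carefully set indices, so that cocycles of $C$ at $j$ correspond to $\ker D$ and the snake-type construction produces a class in $\calH^{j}A$ (namely $[a]$, with $\rmd a=0$).

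Then I will chase the three junction exactness claims. For instance, if $[a]\in\calH^jA$ maps to $0$ in $\calH^{j+1}B$, then $S^ja=\rmd b$, so $a=\delta[P^jb]$. Conversely, $S^j\delta[P^jb]=[\rmd b]=0$. If $\delta[P^jb]=0$, i.e.\ $(S^j)^{-1}\rmd b=\rmd a'$, then $b-S^{j-1}$-type corrections (using $\rmd S^{j-1}a'=S^j\rmd a'$) show that $b-S^{j-1}a'$ is closed, modulo $\img S^{j-1}$. Exactness at $\calH^{j+1}C$ is analogous: a closed $c\in C^{j+1}$ mapping to $0$ in $\calH^{j+1}A$ is $c=\rmd a$ with $S^{j+1}\rmd a=0=\rmd S^j a$. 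Writing $b=$ a preimage via $\rmd$ is unavailable, but $S^ja\in B^{j+1}$ and $c=D(\cdot)$ requires $S^ja=\rmd b$, so the classes must be matched via the group $\calH^{j+1}B$. I expect the exact bookkeeping of which diagonal lands where to be the delicate part, and I will fix it by writing each $\delta$ explicitly as in the snake lemma and verifying each composite and kernel-inclusion directly, with bijectivity of $S^j$ the key input.
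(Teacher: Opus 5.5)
Your architecture matches the paper's: the snake-lemma argument holds in the range where the vertical sequences are short exact, and the spots adjacent to $D$ are checked by hand. Your chases at $\calH^j C^\bs$ (correcting $b$ by $S^{j-1}a'$) and at $\calH^j A^\bs$ are exactly steps (i) and (ii) of the paper's proof. Drop the detour about $\delta^j$ landing in $\calH^{j+1}A^\bs$. The differential of $C^\bs$ out of degree $j$ is $D$, so a cocycle $P^jb$ has $Db=0$ by definition. Hence $a=(S^j)^{-1}\rmd b$ is automatically closed, and $\delta^j$ goes to $\calH^jA^\bs$ as drawn. Your claim that $\rmd a$ ``need not vanish'' is false.

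The genuine gap is the other half of the junction, which is where the coboundaries $\img D \subset C^{j+1}$ matter. You never define $\delta^{j+1}: \calH^{j+1}B^\bs\to\calH^{j+1}C^\bs$, and you never check exactness at $\calH^{j+1}B^\bs$. At $\calH^{j+1}C^\bs$ you only observe that $c\in\img D$ would require $S^ja\in\rmd B^j$, then say the classes ``must be matched via $\calH^{j+1}B^\bs$'' without doing it.

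The missing ingredient is the following definition. For closed $v\in B^{j+1}$ put $\delta^{j+1}[v]=[\rmd (S^j)^{-1}v]$. This lies in $C^{j+1}=\ker S^{j+1}$ because $S^{j+1}\rmd(S^j)^{-1}v=\rmd v=0$. It is well defined on classes precisely because $v=\rmd b$ gives $\rmd(S^j)^{-1}\rmd b=D P^j b$, a coboundary of $C^\bs$.

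With this, exactness at $\calH^{j+1}C^\bs$ is one line. If $c=\rmd a$ with $a\in A^j$, then $v=S^ja$ is closed since $\rmd S^ja=S^{j+1}c=0$, and $\delta^{j+1}[v]=[\rmd a]=[c]$. You never need $c\in\img D$. For exactness at $\calH^{j+1}B^\bs$: if $\rmd(S^j)^{-1}v=DP^jb$, then $u=(S^j)^{-1}(v-\rmd b)$ is closed and $\widetilde S^j[u]=[v]$. The composites through $\delta^{j+1}$ vanish trivially: $\delta^{j+1}[S^ju]=[\rmd u]=0$, and $\rmd(S^j)^{-1}v$ is exact in $A^\bs$. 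Adding these two chases, which are the paper's steps (iii) and (iv), turns your proposal into the paper's proof.
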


\begin{proof}

\tpoint Exactness at $\calH^j C^\bs$.

Let $w\in C^j$ be such that $ \delta^j w = 0$.

Write $w = P^j v$ with $v \in B^j$, and $(S^j)^{-1} \rmd v = \rmd u$ with $u \in A^{j-1}$.

Then $v' = v - S^{j-1} u \in B^j$ satisfies $\rmd v' = 0$ and $P^j v' = w$.

\tpoint Exactness at $\calH^j A^\bs$.

Let $u \in A^j$ be such that $\rmd u = 0$ and $S^j u = \rmd v$ with $v \in B^j$.

Then $w = P^j v \in C^j $ satisfies $D w = 0$ and $\delta^j w = u$.

\tpoint Exactness at $\calH^{j+1} B^\bs$.

Let $v \in B^{j+1}$ be such that $\rmd v = 0$ and $\delta^{j+1} v = D w$ with $w \in C^j$.

Write $w = P^j v'$ with $v' \in B^j$.

Then write $S^j u= ( v- \rmd v')$ with $u \in A^j$ and remark that $\rmd u = \delta^{j+1} S^j u = 0$.

\tpoint Exactness at $\calH^{j+1} C^\bs$.

Let $w \in C^{j+1}$ be such that $\rmd w = 0$ and $I^{j+1} w = \rmd u$ with $u \in A^j$.

Write $v = S^j u \in B^{j+1}$. Then $\rmd  v = \rmd S^j u = S^{j+1} \rmd u = S^{j+1}I^{j+1} w = 0$ and $\delta^{j+1} v = w$.

\medskip

\noindent That's all.\end{proof}
\noindent A more conceptual proof of this result is provided in Appendix \ref{sec:alternative}.

We deduce the following:
\begin{corollary}\label{cor:shortexact}
We have exact sequences:
\begin{equation}
0 \to \coker \widetilde S^{i-1} \to \calH^i C^\bs \to \ker \widetilde S^i \to 0,
\end{equation} 
where $\widetilde S^i: \calH^i(A^\bs) \to \calH^{i+1}(B^\bs)$ is the map on cohomology induced by $S^\bs$.

In particular, if $\widetilde S^\bs = 0$, we get exact sequences:
\begin{equation}
0 \to \calH^i B^\bs \to \calH^i C^\bs \to \calH^i A^\bs \to 0.
\end{equation} 
\end{corollary}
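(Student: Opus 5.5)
The plan is to read the long exact sequence of Proposition \ref{prop:longexact} as one zigzag and cut it at each $\calH^i C^\bs$. The key observation is that the long exact sequence, followed along its arrows, runs
\[
\cdots \to \calH^{i-1} A^\bs \xrightarrow{\widetilde S^{i-1}} \calH^i B^\bs \to \calH^i C^\bs \to \calH^i A^\bs \xrightarrow{\widetilde S^{i}} \calH^{i+1} B^\bs \to \cdots
\]
Here the maps have different origins depending on the position. For $i \le j$, the map $\calH^i B^\bs \to \calH^i C^\bs$ is induced by $P^i$ and $\calH^i C^\bs \to \calH^i A^\bs$ is the snake map $\delta^i$. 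For $i > j$, the map $\calH^{i} B^\bs \to \calH^i C^\bs$ is the snake map $\delta^i$ and $\calH^i C^\bs \to \calH^{i}A^\bs$ is induced by $I^i$. In both cases the remaining arrows $\calH^{i-1}A^\bs \to \calH^i B^\bs$ are those induced by $S^{i-1}$.

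So the first step is to check, reading off the diagram in Proposition \ref{prop:longexact}, that every arrow of the form $\calH^{i-1} A^\bs \to \calH^{i} B^\bs$ in the long exact sequence is indeed $\widetilde S^{i-1}$. This holds by construction: the statement says the vertical arrows are induced by $S^i$. I also need to check the indexing: $S^{i-1}: A^{i-1} \to B^i$, which matches the column placement in diagram (\ref{eq:setup}).

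Second step: apply the standard cutting of a long exact sequence. At $\calH^i C^\bs$, with incoming map $\alpha: \calH^i B^\bs \to \calH^i C^\bs$ and outgoing map $\beta: \calH^i C^\bs \to \calH^i A^\bs$, I use the three exactness relations in turn.
\begin{itemize}
\item Exactness at $\calH^i B^\bs$ gives $\ker \alpha = \img \widetilde S^{i-1}$, so $\alpha$ induces an injection $\coker \widetilde S^{i-1} \to \calH^i C^\bs$.
\item Exactness at $\calH^i A^\bs$ gives $\img \beta = \ker \widetilde S^i$, so $\beta$ maps onto $\ker \widetilde S^i$.
\item Exactness at $\calH^i C^\bs$ gives $\img \alpha = \ker \beta$.
\end{itemize}
Together these yield the short exact sequence $0 \to \coker \widetilde S^{i-1} \to \calH^i C^\bs \to \ker \widetilde S^i \to 0$.

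The special case is immediate. When $\widetilde S = 0$, the cokernel of $\widetilde S^{i-1}$ is all of $\calH^i B^\bs$ and the kernel of $\widetilde S^i$ is all of $\calH^i A^\bs$.

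The only point needing care, and the place I expect any obstacle, is the transition indices $i=j$ and $i=j+1$, where the pattern of which map is a snake map changes.
\begin{itemize}
\item At $i=j$, the outgoing map from $\calH^j C^\bs$ is $\delta^j$ into $\calH^j A^\bs$. Then comes $\widetilde S^j: \calH^j A^\bs \to \calH^{j+1}B^\bs$, followed by $\delta^{j+1}: \calH^{j+1}B^\bs \to \calH^{j+1}C^\bs$.
\item At $i = j+1$, the incoming map to $\calH^{j+1} C^\bs$ is $\delta^{j+1}$ from $\calH^{j+1} B^\bs$, preceded by $\widetilde S^j$.
\end{itemize}
Thus the diagram still alternates as $B \to C \to A \to B$, and the required exactness is exactly that proved in Proposition \ref{prop:longexact}. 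I also need to handle the low boundary ($i=0$, where $\coker \widetilde S^{-1} = \calH^0 B^\bs$ with the convention $A^{-1}=0$) and the ends of the sequence, by interpreting missing groups as $0$.
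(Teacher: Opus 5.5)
Your proposal is correct and is exactly the argument the paper intends when it writes ``We deduce the following''. The paper reads the long exact sequence of Proposition \ref{prop:longexact} as $\cdots \to \calH^{i-1} A^\bs \xrightarrow{\widetilde S^{i-1}} \calH^i B^\bs \to \calH^i C^\bs \to \calH^i A^\bs \xrightarrow{\widetilde S^{i}} \calH^{i+1} B^\bs \to \cdots$ and cuts it at each $\calH^i C^\bs$, and you correctly check that the alternation $B \to C \to A \to B$ persists across $i=j$ and $i=j+1$, where the snake maps $\delta^i$ switch from outgoing to incoming.
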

This accounts for a large part of \cite[Theorem 6 page 1751]{ArnHu21}. Compared with that presentation we obtain the cohomology of the \emph{output complex} $C^\bs$ without studying the \emph{twisted complex}.

\begin{example}[Elasticity and Regge Calculus] We exemplify this technique on the elasticity complex, defined on a domain in $\bbR^3$. The Sobolev regularities are from \cite{Chr11NM}:
\begin{equation}
\label{eq:Sobolev-3D-std}
\begin{tikzcd}
0 \ar[r] & \urmH^1(\bbR^3) \ar[r,"\defo"] &  \Hinc^0(\bbR^{3 \times 3}_\sym) \ar[r,"\inc"] & \Hsymdiv^{-1}(\bbR^{3 \times 3}_\sym) \ar[r,"\div"] & \urmH^{-1}(\bbR^3) \ar[r] & 0.
\end{tikzcd}
\end{equation}
It can be deduced from the following diagram:
\begin{equation}
\begin{tikzcd}
\rmH^1 \otimes \bbR^3 \ar[r,"\grad"]  & \urmH^0_{\curl (S^1)^{-1} \curl} (\bbR^3) \ar[r,"\curl"]                 & \urmH^{-1}_{\curl (S^1)^{-1}, \div} (\bbR^3) \ar[r,"\div"]  &  \rmH^{-1} \otimes \bbR^3,\\
\rmH^0 \otimes \frso_3 \ar[r,"\grad"]  \ar[ur,"S^0"] & \urmH^{-1}_{\curl} \otimes \frso_3 \ar[r,"\curl"]  \ar[ur,"S^1"] &  \urmH^{-1}_{\div} \otimes \frso_3 \ar[r,"\div"]  \ar[ur,"S^2"] & \rmH^{-1} \otimes \frso_3.
\end{tikzcd}
\end{equation}
This diagram chase is just a variant of the one used to obtain \cite[Equation (34)]{ArnHu21}, using different Sobolev regularities, but with the same local algebraic operations $S^i$. We also use, from there, the result that $S^0$ is injective, $S^1$ is an isomorphism, and $S^2$ is surjective. Furthermore they induce the $0$ map on cohomology. We have denoted by $\frso_3$ the Lie algebra of antisymmetric $3 \times 3$ matrices. Denoting the elasticity complex with $C^i$, the above considerations give exact sequences:
\begin{equation}
0 \to \CdR^i \otimes \bbR^3 \to \calH^i C^\bs \to \CdR^i \otimes \frso_3 \to 0,
\end{equation}
expressed in terms of the de Rham cohomology groups $\CdR^\bs$ of the domain. Taking into account the short exact sequence that expresses the rigid body motions as a semidirect product:
\begin{equation}
0 \to \bbR^3 \to \RM_3 \to \frso_3 \to 0,
\end{equation} 
we get that:
\begin{equation}
  \calH^i C^\bs \approx \CdR^i \otimes \RM_3.
\end{equation}

Passing to the discrete setting, the distributional complex based on Regge elements defined in \cite{Chr11NM} can similarly be obtained from the diagram in \cite[Figure 3]{ChrHuLin26}. Thus its cohomology is determined. One needs to check that the discrete sequences $A^\bs$ and $B^\bs$ have the right cohomology (given by de Rham) and that the discrete $S^\bs$ has all the required properties. 
\end{example}

\begin{example}[Finite Element Systems] The interplay between continuous and discrete BGG constructions is illustrated also in \cite{ChrHu23}. It concerns the 2D elasticity sequences (emphasizing either stress or strain), with higher Sobolev regularity than above. In that case the above diagrams do not just consist of complexes of Sobolev spaces together with finite element subcomplexes.

Rather, the objects in the diagrams are functors and the arrows are natural transformations between these functors. More explicitely, in the framework of Finite Element Systems described in \cite{ChrHu23}, the objects are families of vector spaces attached to the cells in a mesh and equipped with restriction operators, and the arrows are families of linear maps commuting with these restrictions.  In other words the objects are presheaves of real vectorspaces on the fixed poset category defined by the mesh and the arrows are morphisms of such. They constitute an abelian category \cite[\S 1.6]{Wei94} and are therefore suitable for homological algebra.

It can also be noted that many categories that are natural from the point of view of analysis, such as Banach spaces or Hilbert spaces, are \emph{not} abelian. This sheds light on the role played by closed range hypotheses (on both $S^\bs$ and the differentials) in \cite{ArnHu21}.
\end{example}

\section{Interlude\label{sec:interlude}}

This interlude provides some background in homological algebra and differential geometry of vector bundles.

\subsection{The mapping cone\label{sec:mapcon}}
We now drop the requirement that $S^i: A^i \to B^{i+1}$ be injective or surjective according to indices. We define spaces $Z^i = A^i \oplus B^i$ and differentials $D^i: Z^i \to Z^{i+1}$ by:
\begin{equation}
  D^i = \left[\begin{matrix}
      \rmd & S^i\\
      0 & \rmd
    \end{matrix} \right]
   : \left[\begin{matrix} 
      B^i\\
      A^i
    \end{matrix} \right]
  \to \left[\begin{matrix} 
      B^{i+1}\\
      A^{i+1}
      \end{matrix} \right].
\end{equation}
The sign convention here is that $S$ anticommutes. This construction is known in homological algebra and algebraic topology as the \emph{mapping cone} \cite[\S 1.5]{Wei94}. In the context of BGG it is called the twisted complex. It can also be considered as the total complex associated with the double complex with just two rows, a thread we will pick up below, in \S \ref{sec:spectral}.

We have a short exact sequence of complexes \cite[\S 1.5.2]{Wei94}:
\begin{equation}
  0 \to B^\bs \to Z^\bs \to A^\bs \to 0.
\end{equation}
It gives rise to a long exact sequence on cohomology \cite[Theorem 1.3.1]{Wei94}. Here, the connecting morphism, deduced from the snake lemma, turns out to be the map on cohomology induced by $S$ \cite[Lemma 1.5.3]{Wei94}.

In the case where the map induced on cohomology by $S$ is zero, the long exact sequence gives short exact sequences:
\begin{equation}
  0 \to \calH^i B^\bs \to \calH^i Z^\bs \to \calH^i A^\bs \to 0.
\end{equation}
This characterizes the cohomology of $Z^\bs$ in terms of those of $A^\bs$ and $B^\bs$.

\begin{remark}\label{rem:isoexact}
We also note, for future use, that in the mapping cone construction, if the maps induced by $S$ on cohomology are instead isomorphisms, then $Z^\bs$ is an exact sequence \cite[Corollary 1.5.4]{Wei94}. 
\end{remark}

\subsection{Covariant exterior derivative\label{sec:covextder}}
Here we introduce some concepts from differential geometry, in particular the covariant exterior derivative associated with a connection \cite[Chapter II]{KobNom63}. In the context of numerical analysis of PDEs, such considerations were featured in \cite{ChrWin06,ChrHal12JMP} (concerning the Yang-Mills equations). Here, the flat (i.e. curvature free) case is especially important.
These remarks amount to a simplified presentation of a special case of \cite{CapSloSou01}. 

We suppose that we work on a base manifold $M$, above which we have finite dimensional fixed fibers $E$ and $F$. The space of differential $i$-forms on $M$, i.e. sections of the $i$-th exterior power of the cotangent bundle, will be denoted:
\begin{equation}
  \Lambda^i = \Lambda^i(M) = \Gamma(\Lambda^i(\rmT^\star M)).
\end{equation}
We have followed notations from the numerical analysis literature. The differential geometry literature often prefers $\Omega^i(M)$ for the object on the right side. The spaces $A^i$, $B^i$ have the form:
\begin{align}
  A^i & = \Lambda^i \otimes E,\\
  B^i & = \Lambda^i \otimes F.
  \end{align}
The spaces are equipped with the standard exterior derivative (tensored with the identity of $E$ and $F$ respectively), denoted $\rmd$.
We may also describe this situation as saying that we have the trivial connection:
\begin{equation}
  \nabla:  \Lambda^0 \otimes (E \oplus F) \to \Lambda^1 \otimes (E \oplus F),
\end{equation}
The associated covariant exterior derivative is the standard exterior derivative (tensored with the identity of $E \oplus F$), which we also just denote by $\rmd$.

Now, given the connection $\nabla$, we can define a new\footnote{We hope the two uses of the letter $A$ is not too confusing.} connection by adding any connection one-form $A$:
\begin{equation}
  A \in \Lambda^1 \otimes \End(E \oplus F).
\end{equation}
We consider the special case where the connection one-form $A$ has the form:
\begin{equation}\label{eq:as}
  A =  \left[\begin{matrix}
      0 & S\\
      0 & 0
    \end{matrix} \right]
  : \left[\begin{matrix} 
      F \\
      E
    \end{matrix} \right]
  \to \left[\begin{matrix} 
      F\\
      E 
 \end{matrix} \right].
\end{equation}
Here we are given some
\begin{equation}\label{eq:primordialS}
S \in  \Lambda^1 \otimes \Hom(E,F).
\end{equation}
This map $S$ corresponds to the previously introduced map $S^0: A^0 \to B^1$, expressed as a map $\Lambda^0 \otimes E \to \Lambda^1 \otimes F$, which at any point $x\in M$ maps the fiber $E$ to $\Lambda^1(\rmT_x^\star M) \otimes F$. Actually we consider mainly the case where it is constant on a vectorspace $M$, so it can be identified with a linear map $E \to \Lambda^1(M) \otimes F$ between finite dimensional spaces.

\begin{example}
  The cases discussed in \cite[\S 4.2]{ArnHu21} correspond to letting $M$ be a vectorspace, fixing some $l \in \bbN$ and defining $E = \Lambda^{l+1}(M)$, $F = \Lambda^{l}(M)$ and using the most natural map:
  \begin{equation}
    S: \Lambda^{l+1}(M) \to \Lambda^1(M) \otimes \Lambda^{l}(M).
  \end{equation}
The elasticity complex (\ref{eq:Sobolev-3D-std}) discussed above is the case $l = 1$ and $\dim M = 3$ of this construction, for specific choices of Sobolev norms.
\end{example}

Given such a connection one-form $A$, we deduce first the new connection $\nabla + A$, then associate with the latter the new covariant exterior derivative, denoted $\rmd_A$. It is given by:
\begin{equation}
  \rmd_A : u \mapsto \rmd u + A \wedge u.
\end{equation}
When $A$ has the above form (\ref{eq:as}), the map $u \mapsto A \wedge u$ has the form:
\begin{equation} 
  \left[\begin{matrix}
      0 & S^i\\
      0 & 0
    \end{matrix} \right]
  : \left[\begin{matrix} 
      \Lambda^i \otimes F \\
       \Lambda^i \otimes E
    \end{matrix} \right]
  \to \left[\begin{matrix} 
       \Lambda^{i+1} \otimes F\\
       \Lambda^{i+1} \otimes E 
 \end{matrix} \right].
\end{equation}
The map $S^i: A^i \to B^{i+1}$ appearing here, if it commutes with the exterior derivative, can be used in the above mapping cone on $A^\bs \oplus B^\bs$, and the differential of the mapping cone is then nothing but $D^i = \rmd_A$. Recall that $\rmd_A \circ \rmd_A = 0$ iff the curvature of $A$ is zero. More generally, denoting the curvature by $\calF(A)$:
\begin{equation}
  \rmd_A \rmd_A u = \calF(A) \wedge u, \textrm{ with } \calF(A) = \rmd A + 1/2 [A, A].
\end{equation}

Recall that connection one-forms can be gauge transformed. Given a function $Q: M \to \GL(E \oplus F)$, we transform:
\begin{equation}
  A \to A' = Q A Q^{-1} - (\rmD Q) Q^{-1}.
\end{equation}
In the special case where $Q$ is of the form:
\begin{equation}
  Q(x) = \left[\begin{matrix}
      I_F & R(x)\\
      0 & I_E
    \end{matrix} \right]
  = \exp \left ( \left [\begin{matrix}
      0 & R(x)\\
      0 & 0
    \end{matrix} \right] \right ),
\end{equation}
we have:
\begin{equation}
  Q(x)^{-1} = \left[\begin{matrix}
      I_F & - R(x)\\
      0 & I_E
    \end{matrix} \right].
\end{equation}
and:
\begin{equation}
  \rm D Q(x) = \left[\begin{matrix}
      0 & \rmD R(x)\\
      0 & 0
    \end{matrix} \right].
\end{equation}
If we gauge transform $A= 0$ to $A'$ using this $Q$ we get simply:
\begin{equation}
  A'  =  \left[\begin{matrix}
      0 & \rmD R\\
      0 & 0
    \end{matrix} \right].
\end{equation}
If now $M$ is a vectorspace and the primordial $S$ introduced in (\ref{eq:primordialS}) is constant, we can define $R$ as the contraction of $S$ by the Euler vectorfield on $M$ (defined simply by $\xi: x \mapsto \xi(x) = x$) to guarantee that $\rmD R = S$. The conclusion is that our connection deduced from $S$ can be obtained from the trivial connection by a gauge transformation. In particular it is curvature free, from the general formula:
\begin{equation}
\calF(A') = Q\calF(A) Q^{-1}.
\end{equation}
Another basic property of the gauge transformation is that:
\begin{equation}
  \rmd_{A'} (Q u) = Q (\rmd_A u).
\end{equation}
Let us write the mapping cone relative to $S$ as $Z_S^\bs$,  allowing also $S=0$ in this notation. We conclude that the induced map:
\begin{equation}
  Q : \mapping{Z_0^i}{Z_S^i}{u}{Q u},
\end{equation}
is a morphism of complexes (when $Z_0^\bs $ is equipped with $\rmd_0$ and $Z_S^\bs$ is equipped with $\rmd_S$). It is an isomorphism by the group action property of gauge transformations. In particular we get isomorphic cohomology groups.

\begin{remark} In this setting we can see that $S$ induces the $0$ map on cohomology as follows. We may arrange things as:
\begin{equation}
  \xymatrix{
    0 \ar[r] & B^\bs \ar[r] &  Z_S^\bs \ar[r] &  A^\bs \ar[r] & 0\\
    0 \ar[r] & B^\bs \ar[r] \ar[u]^{=} &  Z_0^\bs \ar[r] \ar[u]^Q &  A^\bs \ar[r] \ar[u]^{=} & 0
    }
\end{equation}
The middle upward arrow is a morphism of complexes, as noted above. The two squares commute by the upper triangular structure of $Q$ with identities on the diagonal.
Writing the long exact sequences of the two rows, connecting them by functoriality (of going from short to long exact sequences), and using that the connecting morphism on top is deduced from $S$ and that on the bottom is $0$, the claim follows. Compare with the argument given in Remark \ref{rem:signs}.
\end{remark}

\section{The BGG reduction using pseudo-inverses\label{sec:bgg}}
Compared with \cite{CapHu24} we have specialised to the case of diagrams with only two rows. We now give an account of parts of that presentation, using short exact sequences as a substitute for some direct computations that chase elements through diagrams.

Let $P_C^{i+1}$ be a projection in $B^{i+1}$ with kernel $\img S^i$ and let $P_K^{i}$ be a projection in $A^i$ with range $\ker S^i$. This amounts to choosing supplementaries of the image and kernel of $S^i$. Then we define $T^i : B^{i+1} \to A^i$ be the map obtained by inverting the induced isomorphism $S^i : \ker P^i_K \to \img S^i$ and extending it by zero on $\img P_C^{i+1}$. It is a pseudoinverse of $S^i$. Furthermore:
\begin{equation}
P_C^{i+1}= I - S^i T^i \textrm{ and } P_K^i = I - T^i S^i.
\end{equation}

We let $C^{i+1}$ denote the range of $P_C^{i+1}$ in $B^{i+1}$. It is isomorphic to the quotient space $\coker S^i = B^{i+1}/\img S^i$. The differential on $C^i$ is then $P_C^i \rmd $ where $\rmd $ is the differential of $B^i$. We also put $K^i = \ker S^i$.

We define the following endomorphism of $A^i \oplus B^i$:
\begin{equation}
 L^i  =  \left[\begin{matrix}
      I & 0\\
      -T^i \rmd  & I
    \end{matrix} \right].
\end{equation}
We introduce:
\begin{align}\label{eq:defD}
\calD & = L^{-1} \rmd_S L.
\end{align}
It is obviously a differential. We compute:
\begin{align}
\calD & = \left[\begin{matrix}
      P_C \rmd & S\\
      -P_K \rmd T \rmd  & P_K \rmd
    \end{matrix} \right].
\end{align}

We see that $K^i \oplus C^i$ is a subcomplex of $A^i \oplus B^i$ equipped with the differential $\calD$. We consider that $K^i \oplus C^i$ is equipped with the restriction of $\calD$. We let $I$ denote the inclusion map into $A^i \oplus B^i$. We notice that the restriction of $\calD$ to the subcomplex has the simpler form:
\begin{equation}\label{eq:DBGG}
\left[\begin{matrix}
      P_C \rmd & 0\\
      - P_K \rmd T \rmd  & \rmd
    \end{matrix} \right].
\end{equation}
Thus it can also be interpreted as a mapping cone. We call it the \emph{BGG reduced complex}.

From (\ref{eq:defD}) we immediately get:
\begin{equation}
\rmd_S L I = L I \calD.
\end{equation}
This can be interpreted as saying that the maps $L I$ define a morphism of complexes from $K^i \oplus C^i$ to $A^i \oplus B^i$, the former equipped with $\calD$ and the latter now equipped with $\rmd_S$. The map $LI$ is known as the splitting operator in BGG, see \cite[arxiv \S 4.3]{CapSloSou01} and \cite[\S 7.3]{Cap26}.

We also let $J^i: A^i \oplus B^i \to \img T^i \oplus \img S^{i-1}$ be the projection:
\begin{equation}
J^i : \left[\begin{matrix}
      I - P_C^i & 0\\
      0  & I - P_K^i
    \end{matrix} \right].
\end{equation}

The spaces  $\img T^i \oplus \img S^{i-1}$ are considered as a complex under the differential:
\begin{equation}\label{eq:defsi}
\calS^i = \left[\begin{matrix}
      0 & S^i\\
      0 & 0
    \end{matrix} \right].
\end{equation}
 We check by computation that:
\begin{equation}
\calS J L^{-1} = J L^{-1} \rmd_S.
\end{equation}
This gives us the sequence of complexes:
\begin{equation}\label{eq:bbgshortexact}
  \xymatrix{
    0 \ar[r] & (K^i \oplus C^i, \calD ) \ar[r]^{L I}  &  (A^i \oplus B^i, \rmd_S)  \ar[r]^{\! \! J L^{-1} \ \ \ } & (\img T^i \oplus \img S^{i-1}, \calS) \ar[r] & 0
}
\end{equation}
It is exact just because the composition $J I$ determines an exact sequence.

We summarize some of our findings as follows.
\begin{proposition}
  Equation (\ref{eq:bbgshortexact}) is an exact sequence of complexes.
\end{proposition}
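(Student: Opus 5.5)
The claim has two parts. The maps $LI$ and $JL^{-1}$ must be morphisms of complexes, and the sequence must be exact in each degree $i$ as a sequence of vector spaces. The first part is already recorded above: $\rmd_S L I = L I \calD$ follows from the definition (\ref{eq:defD}) of $\calD$ together with the fact that $K^\bs \oplus C^\bs$ is $\calD$-stable, and $\calS J L^{-1} = J L^{-1} \rmd_S$ is the computation stated before the proposition. I would verify the latter explicitly, writing $L^{-1} = \left[\begin{smallmatrix} I & 0\\ T\rmd & I\end{smallmatrix}\right]$, expanding both sides blockwise, and using the relations $P_C = I - ST$, $P_K = I - TS$, $S\rmd = \rmd S$. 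In that verification the identities $(I-P_C)S = S$ and $S(I-P_K) = S$ do the cancelling.

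For exactness I would reduce to the algebraic sequence
\begin{equation*}
0 \to K^i \oplus C^i \xrightarrow{I} A^i \oplus B^i \xrightarrow{J} \img T^i \oplus \img S^{i-1} \to 0 .
\end{equation*}
This sequence is exact because $J$ is the diagonal projection. On the $B$-component it is $I - P_C^i$, with kernel $\img P_C^i = C^i$ and image $\img S^{i-1}$. On the $A$-component it is $I - P_K^i$, with kernel $\img P_K^i = \ker S^i = K^i$ and image $\img T^i$. (Careful bookkeeping of which components carry which labels is needed here, since the paper's block ordering lists $B$ first.) The actual sequence is obtained by composing with the isomorphism $L$ of $A^i \oplus B^i$: the first map is $L\circ I$ and the second is $J\circ L^{-1}$. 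Since $L$ is invertible (unipotent lower triangular), $\ker(JL^{-1}) = L(\ker J) = L(\img I) = \img(LI)$. In addition, $LI$ is injective and $JL^{-1}$ is surjective. This gives exactness at all three spots.

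The main obstacle I expect is the consistency of conventions rather than any deep step. One has to check that $\img S^{i-1}$ and $\img T^i$ sit in the correct slots and are stable under $\calS$, so that $\calS^i$ maps $\img T^i \oplus \img S^{i-1}$ into $\img T^{i+1}\oplus \img S^i$. This holds because $S^i$ maps $\img T^i$ onto $\img S^i$, and $\calS^2 = 0$ trivially. One also has to check that the sign and ordering conventions for $\rmd_S$ match the block form of $L$, so that the identity $\calS JL^{-1} = JL^{-1}\rmd_S$ really holds. I would settle these by a direct $2\times 2$ block computation before invoking the exactness argument above.
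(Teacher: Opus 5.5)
Your route is the paper's: the two intertwining identities make $LI$ and $JL^{-1}$ chain maps, and exactness is transported from the split exact sequence $0\to K^i\oplus C^i\xrightarrow{I}A^i\oplus B^i\xrightarrow{J}\img T^i\oplus\img S^{i-1}\to 0$ through the isomorphism $L$. That is all the paper's remark that ``the composition $JI$ determines an exact sequence'' amounts to.

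One correction to the verification you plan. Here $\rmd_S$ is the mapping-cone differential, so $S$ must \emph{anticommute} with $\rmd$, i.e.\ $S\rmd+\rmd S=0$, not $S\rmd=\rmd S$. With your sign, $\rmd_S^2$ would not vanish. This sign is exactly what handles the $A$-component; your identities $(I-P_C)S=S$ and $S(I-P_K)=S$ only settle the $B$-component. For $b\in B^i$, $a\in A^i$, the $A$-component of $JL^{-1}\rmd_S$ is $(I-P_K)(\rmd a+T\rmd S a)=TS\rmd a+T\rmd S a=T(S\rmd+\rmd S)a$, using $I-P_K=TS$ and $(I-P_K)T=T$. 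The corresponding component of $\calS JL^{-1}$ is $0$. With anticommutation these agree. With $S\rmd=\rmd S$ you would get $2TS\rmd a\neq 0$, and the identity would fail.
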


We are now in a position to conclude.
\begin{proposition}\label{prop:bggiso}
The splitting map $L I$ induces isomorphisms on cohomology, from the BGG reduced complex to $A^\bs \oplus B^\bs$ equipped with $\rmd_S$.
\end{proposition}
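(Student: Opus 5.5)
We use the short exact sequence of complexes (\ref{eq:bbgshortexact}) together with its long exact sequence on cohomology (\cite[Theorem 1.3.1]{Wei94}). In that long exact sequence, the maps $\calH^i(K^\bs\oplus C^\bs,\calD)\to\calH^i(A^\bs\oplus B^\bs,\rmd_S)$ are those induced by $LI$. By exactness, they are all isomorphisms as soon as the quotient complex $(\img T^\bs \oplus \img S^{\bs-1},\calS)$ is exact, i.e. has vanishing cohomology in every degree. So the whole proof reduces to showing that this quotient complex is acyclic.

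For this we exploit the very simple form (\ref{eq:defsi}) of $\calS$. In degree $i$ the space is $\img T^i\oplus \img S^{i-1}$. Here $\img T^i \subset A^i$ is the chosen supplementary $\ker P_K^i$ of $\ker S^i$, and $\img S^{i-1}\subset B^i$. The differential $\calS^i$ sends the $A$-component $a\in\img T^i$ to $S^i a\in \img S^i \subset B^{i+1}$ and kills the $B$-component. Thus the complex splits as a direct sum of two-term complexes
\begin{equation*}
\img T^i \xrightarrow{\ S^i\ } \img S^i,
\end{equation*}
with $\img T^i$ in degree $i$ and $\img S^i$ in degree $i+1$. The key fact, recorded when $T^i$ was defined, is that $S^i$ restricted to $\ker P_K^i=\img T^i$ is an isomorphism onto $\img S^i$. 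Each two-term complex is therefore exact, and hence so is their direct sum. Equivalently, this is the mapping cone of identity maps, which is exact by Remark \ref{rem:isoexact}.

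The main point to check carefully is that the quotient complex is indeed this direct sum of two-term pieces with the correct degree shifts. In degree $i$, the kernel of $\calS^i$ is $0\oplus \img S^{i-1}$, since $S^i$ is injective on $\img T^i$. The image of $\calS^{i-1}$ is $0\oplus S^{i-1}(\img T^{i-1})=0\oplus\img S^{i-1}$. These coincide, so the cohomology vanishes in every degree. The long exact sequence then gives $\calH^i(LI)$ bijective in all degrees, which proves the claim. We would also remark that the exactness and the morphism properties of $LI$ and $JL^{-1}$ were established above, so nothing else needs checking.
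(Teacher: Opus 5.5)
Your proof is correct and is the paper's argument: via the long exact sequence of (\ref{eq:bbgshortexact}), everything reduces to the acyclicity of $(\img T^\bs \oplus \img S^{\bs-1}, \calS)$, which holds because $S^i : \img T^i \to \img S^i$ is an isomorphism. Your explicit kernel/image computation just spells out the step the paper leaves implicit.
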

\begin{proof}
  Since the induced maps $S^i: \img T^i \to \img S^i$, appearing in the definition (\ref{eq:defsi}) of $\calS^i$,  are isomorphisms, the third complex in (\ref{eq:bbgshortexact}) is exact.
\end{proof}
This provides an alternative proof of \cite[Theorem 2 p. 1156]{CapHu24} or \cite[Theorem p. 17]{Cap26} in the special case of two rows.

\section{The use of a spectral sequence\label{sec:spectral}}
In this section we illustrate a spectral sequence technique \cite{McC01} to obtain the cohomology of the BGG reduced complex. This computation was inspired\footnote{This section has been available as the online note \emph{Spectral sequences: friend or foe?} (Stanford University, 2008).} by \cite[\S 1.6]{Vak25}. We quote the spirit: ''What is perhaps different in this presentation is
that we will use spectral sequences to prove things that you may have already seen,
and that you can prove easily in other ways. This will allow you to get some hands-on experience in how to use them.''

Here we use:
\begin{align}
  C^{j+1} & = \coker S^j ,\\
  K^j & = \ker S^j.
\end{align}
Then we have an exact sequence of complexes:
\begin{equation}
   \xymatrix{
    0 \ar[r] & K^\bs \ar[r] &  A^\bs \ar[r]^{S^\bs} &  B^{\bs +1} \ar[r] & C^{\bs+1} \ar[r] & 0\\
   }
\end{equation}

We consider the following double complex:
\begin{equation}
\xymatrix{
0  & 0 & 0 & 0 & 0 \\
B^{j-1} \ar[u] \ar[r] & B^j \ar[u] \ar[r] & B^{j+1} \ar[u] \ar[r] & B^{j+2} \ar[u] \ar[r]  & B^{j+3} \ar[u] \\ 
A^{j-2} \ar[u]^{S^{j-2}} \ar[r] & A^{j-1} \ar[u]^{S^{j-1}} \ar[r] & A^{j} \ar[u]^{S^{j}} \ar[r] & A^{j+1} \ar[u]^{S^{j+1}} \ar[r]  & A^{j+2} \ar[u]^{S^{j+2}} \\
0 \ar[u] & 0 \ar[u] & 0 \ar[u] & 0 \ar[u] & 0 \ar[u]
}
\end{equation}
The total complex of this double complex is then the mapping cone relative to $S^i : A^i \to B^{i+1}$. It was previously denoted $Z_S^\bs$. We compute its cohomology in two ways, using spectral sequences.

First we start vertically and continue from there horizontally and then a downward knight move.  Second we start horizontally and continue from there vertically and then with an upward knight move.

\paragraph{First computation}

(i) On the first page we get:
\begin{equation}
\xymatrix{
0  & 0 & 0 & 0 & 0 \\
C^{j-1} \ar[r] & C^j  \ar[r] & C^{j+1} \ar[r] & C^{j+2} \ar[r]  & C^{j+3} \\ 
K^{j-2} \ar[r] & K^{j-1} \ar[r] & K^{j} \ar[r] & K^{j+1} \ar[r]  & K^{j+2} \\
0  & 0  & 0  & 0 & 0
}
\end{equation}

(ii) On the second page we get, for some maps $\phi^i$:
\begin{equation}
\xymatrix{
0  & 0 & 0 & 0 & 0 \\
\calH^{j-1}(C^\bs) \ar [rrd]^{\phi^{j-1}} & \calH^j(C^\bs)  \ar[rrd]^{\phi^j} & \calH^{j+1}(C^\bs) \ar[rrd]^{\phi^{j+1}} & \calH^{j+2}(C^\bs)  & \calH^{j+3}(C^\bs) \\ 
\calH^{j-2}(K^\bs)  & \calH^{j-1}(K^\bs) & \calH^{j}(K^\bs) & \calH^{j+1}(K^\bs)  & \calH^{j+2}(K^\bs) \\
0  & 0  & 0  & 0 & 0
}
\end{equation}

(iii) On the third page we get the kernels of $\phi^i$ on the second row and the cokernels on the third row. The spectral sequence has converged.

We deduce:
\begin{equation}\label{eq:spec1}
  \calH^i(Z_S^\bs) \approx  \coker \phi^{i-1} \oplus \ker \phi^i.
\end{equation}

\paragraph{Second computation} On the first page we get:
\begin{equation}
 \xymatrix{
0  & 0 & 0 & 0 & 0 \\
\calH^{j-1}(B^\bs)  \ar[u] & \calH^j(B^\bs) \ar[u] & \calH^{j+1}(B^\bs) \ar[u] & \calH^{j+2}(B^\bs) \ar[u]  & \calH^{j+3}(B^\bs) \ar[u] \\ 
\calH^{j-2}(A^\bs) \ar[u]^{\widetilde{S}^{j-2}}(A^\bs) & \calH^{j-1}(A^\bs) \ar[u]^{\widetilde{S}^{j-1}}  & \calH^{j}(A^\bs) \ar[u]^{\widetilde{S}^{j}}  & \calH^{j+1}(A^\bs) \ar[u]^{\widetilde{S}^{j+1}}  & \calH^{j+2}(A^\bs) \ar[u]^{\widetilde{S}^{j+2}} \\
0 \ar[u] & 0 \ar[u] & 0 \ar[u] & 0 \ar[u] & 0 \ar[u]
}
\end{equation} 
When the maps $\widetilde S^i$ induced on cohomology by $S^i$ are the zero maps, the second page has the same spaces, connected by the zero maps. The spectral sequence has converged.

We deduce:
\begin{equation}\label{eq:spec2}
  \calH^i(Z_S^\bs) \approx  \calH^{i}(A^\bs) \oplus \calH^i(B^\bs).
\end{equation}

\paragraph{Conclusion} Suppose now that we have anticommuting maps $\Phi^i: C^i \to K^{i+1}$ that induce, on cohomology, the maps $\phi^i: \calH^i C^\bs \to \calH^{i+1} K^\bs$ obtained above in the spectral sequence. We construct the mapping cone on the spaces $Y_\Phi^i = K^i \oplus C^i$:
\begin{equation}
  \left[\begin{matrix}
      \rmd & 0\\
      \Phi^i\ & \rmd
    \end{matrix} \right]
   : \left[\begin{matrix} 
      C^i\\
      K^i
    \end{matrix} \right]
  \to \left[\begin{matrix} 
      C^{i+1}\\
      K^{i+1}
      \end{matrix} \right]
\end{equation}
We shall prove that it has the familiar cohomology $\calH^i(A^\bs) \oplus \calH^i(B^\bs)$.

As before we have a short exact sequence:
\begin{equation}
0 \to  K^\bs \to Y_\Phi^\bs \to C^\bs \to 0
\end{equation}
The corresponding long exact sequence is:
\begin{equation}
  \xymatrix{
 \calH^{i-1}(C^\bs) \ar[r]^{\phi^{i-1}}  & \calH^{i}(K^\bs) \ar[r] &  \calH^{i}(Y_\Phi^\bs) \ar[r] & \calH^{i}(C^\bs) \ar[r]^{\phi^i} & \calH^{i+1}(K^\bs)
  }
\end{equation}
This can be shortened to:
 \begin{equation}
  \xymatrix{
 0 \ar[r] & \coker \phi^{i-1} \ar[r] &  \calH^{i}(Y_\Phi^\bs) \ar[r] & \ker \phi^i \ar[r] & 0.
  }
 \end{equation}
which gives:
 \begin{equation}
   \calH^{i}(Y_\Phi^\bs) \approx  \coker \phi^{i-1} \oplus \ker \phi^i.
 \end{equation}
By the fact that the two spectral sequences associated with the above double complex both compute the cohomology of the total complex, viz. (\ref{eq:spec1}) and (\ref{eq:spec2}), we get:
\begin{equation}\label{eq:specbgg}
    \calH^{i}(Y_\Phi^\bs) \approx  \calH^{i}(A^\bs) \oplus \calH^i(B^\bs).
 \end{equation}

When pseudoinverses of $S^i$ are introduced, as in the previous section, we may put:
\begin{equation}
 \Phi^i = - P_K^{i+1} \rmd T^i \rmd. 
\end{equation}
In view of the already noted fact that (\ref{eq:DBGG}) defines a differential, the maps $\Phi^\bs : C^\bs \to K^{\bs+1}$ anticommute. To check that this choice of $\Phi^i$ really induces $\phi^i$ on cohomology, relies on the precise definition of $\phi^i$, that is, of the differential on page 2 of the spectral sequence. The definition is explained in \cite[Exercise 5.1.2 page 121]{Wei94} (transposing from homology to cohomology). Recognizing the same zigzag, we conclude that the cohomology of the BGG reduced complex is given by (\ref{eq:specbgg}). This conclusion is a bit less precise compared with Corollary \ref{cor:shortexact} and Proposition \ref{prop:bggiso}.

\begin{remark}
Under the hypotheses of  \S \ref{sec:longexact},  each space in the mapping cone $Y_\Phi^\bs$ -- identified with the BGG reduced complex defined by (\ref{eq:DBGG}) -- will have just one component, first $C^i$ and then $K^i$. Furthermore $\Phi^i = 0$ for $i \neq j$ and $\Phi^j = -D$, the operator appearing in (\ref{eq:cokerker}).
\end{remark}

\section*{Acknowledgements}
I am grateful to Andreas Cap for interesting discussions in connection with the program at ESI (Vienna) where the course \cite{Cap26} was given, as well as comments on the first version of this manuscript. In particular this led to a better framing of the goals of BGG and the more conceptual proof of Proposition \ref{prop:longexact} provided in  Appendix \ref{sec:alternative}.

\bibliography{../Bibliography/alexandria,../Bibliography/newalexandria,../Bibliography/mybibliography}{}
\bibliographystyle{plain}

\appendix


\section{Alternative proof\label{sec:alternative}}

We now provide an alternative and more conceptual proof of the main result of \S \ref{sec:longexact}, for comparison.
\begin{proof}(of Proposition \ref{prop:longexact})
We first consider the second half of the complex (\ref{eq:setup}). It is understood that vertical complexes are extended by $0$ and are short exact. 
\begin{equation}
  \begin{tikzcd}
0\ar[r]&    B^{j+1} \ar[r]  & B^{j+2} \ar[r] & B^{j+3}  \\
0\ar[r]&    A^j  \ar[u,"S^{j}"] \ar[r] & A^{j+1}   \ar[u,"S^{j+1}"] \ar[r] & A^{j+2}  \ar[u,"S^{j+2}"]\\
    & 0 \ar[u] \ar[r] & C^{j+1}   \ar[u] \ar[r] & C^{j+2} \ar[u]  
  \end{tikzcd}
\end{equation}
We deduce the long exact sequence:
\begin{equation}\label{eq:rightlong}
  \begin{tikzcd}
    0 \ar[r] & \ker \rmd |_{A^j} \ar[r] & \ker \rmd |_{B^{j+1}} \ar[r] & \ker \rmd |_{C^{j+1}} \ar[lld] \\
      & \calH^{j+1} A^\bs \ar[r] & \calH^{j+2} B^\bs
  \end{tikzcd}
\end{equation}
This sequence can be shortened to:
\begin{equation}
  \begin{tikzcd}
    0 \ar[r] & \ker \rmd |_{A^j} \ar[r] & \ker \rmd |_{B^{j+1}} \ar[r] & C^{j+1} \cap \rmd A^j \ar[r] & 0
  \end{tikzcd}
\end{equation}
We place this sequence vertically in the first half of the complex (\ref{eq:setup}):
\begin{equation}
\begin{tikzcd}
C^{j-1} \ar[r] & C^j \arrow[r,"D"] & C^{j+1} \cap \rmd A^j \ar[r] & 0 \\
B^{j-1} \ar[u] \ar[r] & B^j \ar[u] \ar[r] & \ker \rmd |_{B^{j+1}} \ar[u] \ar[r] & 0 \\
A^{j-2} \ar[u,"S^{j-2}"] \ar[r] & A^{j-1} \ar[u,"S^{j-1}"] \ar[r] &  \ker \rmd |_{A^{j}} \ar[u,"S^{j}"]  \ar[r] & 0 
\end{tikzcd}
\end{equation}
This gives the long exact sequence:
\begin{equation}
  \begin{tikzcd}
    \calH^{j-1} A^\bs \ar[r] & \calH^j B^\bs \ar[r] & \calH^j C^\bs \ar[lld]\\
    \calH^{j} A^\bs \ar[r] & \calH^{j+1} B^\bs \ar[r] & (C^{j+1} \cap \rmd A^j)/(D C^j) \ar[r] & 0
  \end{tikzcd}
\end{equation}
Here, we want to replace $C^{j+1} \cap \rmd A^j$ by the larger space $\ker \rmd |_{C^{j+1}}$, to identify the desired cohomology group at $C^{j+1}$. We do this thanks to (\ref{eq:rightlong}) and get:
\begin{equation}
  \begin{tikzcd}
    \calH^{j-1} A^\bs \ar[r] & \calH^j B^\bs \ar[r] & \calH^j C^\bs \ar[lld]\\
    \calH^{j} A^\bs \ar[r] & \calH^{j+1} B^\bs \ar[r] & \calH^{j+1} C^\bs  \ar[lld]\\
    \calH^{j+1} A^\bs \ar[r] & \calH^{j+2} B^\bs
  \end{tikzcd}
\end{equation}
This completes the proof.
\end{proof}
  
\end{document}